\newcommand{\R}{\mathbb{R}}
\newcommand{\Z}{\mathbb{Z}}
\newcommand{\N}{\mathbb{N}}
\newcommand{\SI}{\mathcal{I}}
\newcommand{\SD}{\mathcal{D}}
\newcommand{\ST}{\mathcal{T}}
\newcommand{\SN}{\mathcal{N}}
\renewcommand{\S}{\mathbb{S}}
\newcommand{\IP}[2]{\left<#1,#2\right>}
\newcommand{\vn}[1]{\lVert#1\rVert}
\newcommand{\Ko}{K_{\text{$\mspace{-1mu}o\mspace{-1mu}s\mspace{-1mu}c$}}}
\newcommand{\kav}{\overline{k}}
\newtheorem{thm}{Theorem}[section]
\newtheorem*{thm*}{Theorem}
\newtheorem*{conj}{Conjecture}
\newtheorem{lem}[thm]{Lemma}
\newtheorem{cor}[thm]{Corollary}
\theoremstyle{definition}
\newtheorem{rmk}[thm]{Remark}
\begin{document}

\title{Convergence for global curve diffusion flows}
\author{Glen Wheeler}

%\thanks{Financial support from the Alexander-von-Humboldt Stiftung is gratefully acknowledged}
\address{University of Wollongong\\
Wollongong\\
NSW 2522\\
Australia}
\email{glenw@uow.edu.au}
\subjclass[2000]{53C44 \and 58J35} 

\begin{abstract}
In this note we establish exponentially fast smooth convergence for global
curve diffusion flows, and discuss open problems relating embeddedness to
global existence (Giga's conjecture) and the shape of Type I
singularities (Chou's conjecture).
\end{abstract}
\maketitle

\section{Introduction}

The curve diffusion flow is the one-parameter family of immersed curves
$\gamma:\S^1\times[0,T)\rightarrow\R^2$ with normal velocity equal to
$-\text{grad}_{H^{-1}}(L(\gamma))$, that is
\begin{equation}
\partial^\perp_t\!\gamma = -k_{ss}\nu.
\label{CD}
\tag{CD}
\end{equation}

In \cite{W13}, the author proves that if the isoperimetric ratio and
$L^2$-normalised oscillation of curvature are close to their value on any
circle (with an explicit estimate on the constant) then the flow exists
globally and converges to a circle.
However the rate of convergence was not established.

In this note we prove a general convergence principle that can be summarised as:
\begin{align*}
	T = \infty \quad \Longrightarrow \quad \gamma&\text{ converges exponentially fast to an $\omega$-circle}\\ &\qquad\text{ in the smooth topology.}
\end{align*}

\begin{thm}
\label{TMmain}
Suppose $\gamma:\S\times[0,\infty)\rightarrow\R^2$ is a global curve diffusion flow
with smooth initial data $\gamma_0$ that has winding number $\omega$.
Then $\gamma$ converges exponentially fast to a round $\omega$-circle in the
smooth topology, with explicit estimates (here $m\in\N$)
\[
	\vn{k_{s^m}}_2^2 \le c_me^{-\kav_0^4\,t}
	\,,
\]
with
\[
	\SD(t) \le \SD_0 e^{-4\kav_0^4\,t}
	\quad\text{ and }\quad
	\Ko(t) \le c_0e^{-2\kav_0^4\,t}
	\,.
\]
The constants $c_m$ depend only on the initial data $\gamma_0$.
\end{thm}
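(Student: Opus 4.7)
The plan is to combine the monotonicity built into \eqref{CD} with the Kuwert--Sch\"{a}tzle--Dziuk hierarchy of interpolated evolution equations for the $\vn{k_{s^m}}_2^2$, as deployed in \cite{W13}. Under \eqref{CD}, the enclosed area $A$ and winding number $\omega$ are preserved, and $\rd{L}{t} = -\vn{k_s}_2^2 \le 0$. Hence $\kav = 2\pi\omega/L$ is non-decreasing with $\kav(t) \ge \kav_0 > 0$; the isoperimetric inequality keeps $L(t)$ bounded above and below, $\int_0^\infty \vn{k_s}_2^2\,dt < \infty$, and $\SD = L^2 - 4\pi\omega A$ is monotone non-increasing.

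The first substantive step is exponential decay of $\Ko = \Kl - 4\pi^2\omega^2/L$. From $\partial_t k = -k_{ssss} - k^2 k_{ss}$ and $\partial_t\,ds = k k_{ss}\,ds$ one computes $\rd{\Ko}{t} = -2\vn{k_{ss}}_2^2 + 3\int k^2 k_s^2\,ds - \kav^2 \vn{k_s}_2^2$. Writing $k = (k-\kav)+\kav$ and combining the Wirtinger inequality $\vn{k_s}_2^2 \ge (2\pi/L)^2\,\Ko$ with multiplicative Sobolev interpolation on $\S$, the cubic term is absorbed into $\vn{k_{ss}}_2^2$ up to a factor $O(\Ko)$. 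Provided $\Ko$ is below a universal threshold, this closes to $\rd{\Ko}{t} \le -2\kav_0^4\,\Ko$, and a Bonnesen-type bound of the form $\SD \lesssim \Ko^{\,2}/\kav_0^4$ then yields the stated rate for $\SD$.

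To reach the smallness threshold from the mere hypothesis of global existence, I would use the time-integrability of $\vn{k_s}_2^2$ (which forces $\Ko(t_n) \to 0$ along some sequence $t_n \to \infty$) together with parabolic smoothing for the fourth-order operator in \eqref{CD} to produce uniform-in-$t$ bounds on every $\vn{k_{s^m}}_2^2$. These turn subsequential convergence into convergence as $t \to \infty$, and place $\Ko(t)$ in the smallness regime beyond some $t_\ast$. The higher-order rates are then obtained inductively from $\rd{}{t}\vn{k_{s^m}}_2^2 = -2\vn{k_{s^{m+2}}}_2^2 + P_m$, where $P_m$ is a polynomial of scaling weight $2m+4$ in $k, k_s, \dots, k_{s^{m+1}}$; multiplicative interpolation together with the already-proven decay of $\Ko$ closes a Gronwall inequality of the required form. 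Finally, convergence to a \emph{fixed} round $\omega$-circle (not merely a moving family) is obtained by observing that the barycentre moves at rate bounded by $\vn{k_s}_2$, which is integrable in $t$ by the exponential decay already established.

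The main obstacle is securing the sharp rate $\kav_0^4$: the Wirtinger and interpolation constants must be tracked carefully, using $(2\pi/L)^2 = \kav^2/\omega^2 \ge \kav_0^2/\omega^2$, so that the closed differential inequality carries the correct exponent rather than an inexplicit positive constant. A second delicate point is upgrading subsequential convergence $\Ko(t_n) \to 0$ to monotone-in-$t$ convergence, which requires the smoothing estimate to be sufficiently robust to exclude oscillation of $\Ko(t)$ near $0$.
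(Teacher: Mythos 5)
Your overall architecture (monotonicity of $L$, $A$ and $\SD$; subsequential smallness of $\vn{k_s}_2^2$; uniform bounds on all curvature derivatives via smoothing; a decay rate; integration in time to pin down the limit circle) matches the paper's for the soft parts, but the step carrying all of the quantitative content --- the closed inequality $\rd{\Ko}{t}\le-2\kav_0^4\,\Ko$ --- has a genuine gap, and it is precisely the step the paper is organised to avoid. Writing $k=(k-\kav)+\kav$ in your identity $\rd{\Ko}{t}=-2\vn{k_{ss}}_2^2+3\int k^2k_s^2\,ds-\kav^2\vn{k_s}_2^2$, the terms that survive after the $(k-\kav)$-errors are absorbed are $-2\vn{k_{ss}}_2^2+2\kav^2\vn{k_s}_2^2$. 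For $\omega=1$ the Wirtinger inequality gives $\vn{k_{ss}}_2^2\ge\kav^2\vn{k_s}_2^2$ with constant exactly one, saturated by the first Fourier harmonic of $k-\kav$; so this combination is nonpositive but carries \emph{no spectral gap}, and no term of the form $-c\,\Ko$ with $c>0$ emerges (for $\omega\ge2$ the combination can even be positive). What remains after absorbing the cubic errors is at best $-c\,\Ko^2$, which yields only $1/t$ decay --- this is exactly why \cite{W13} obtained global existence and subconvergence but no rate. The paper's mechanism is different: it first proves exponential decay of $\SD$ directly from $\SD'=-2L\vn{k_s}_2^2$, and only then transfers this to $\Ko$ via the interpolation inequality $\Ko\le c\,\SD^{1/2}/L$, proved in the appendix by a Fourier decomposition of the \emph{position vector} together with the discrete H\"older inequality. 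That inequality is the decisive new ingredient your proposal lacks, and without it (or an equivalent way of seeing that the first harmonic of $k-\kav$ is higher order near a circle) the direct Gronwall argument for $\Ko$ does not close at an exponential rate.

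Two further points. Your ``Bonnesen-type bound'' $\SD\lesssim\Ko^{2}/\kav_0^4$ is false: near a round circle both $\SD$ and $\Ko$ are quadratic in the perturbation amplitude, so $\SD$ cannot be controlled by $\Ko^2$; the correct comparison (the lower estimate in the appendix) is $\SD\le\tfrac{L^2}{4}\Ko^{1/2}$, which has the opposite homogeneity, so even granting exponential decay of $\Ko$ your route would not return the stated rate $4\kav_0^4$ for $\SD$. Finally, your lower bound on $L(t)$ via the isoperimetric inequality silently assumes $A>0$; this must be proved, and the paper does so by showing that $A\le0$ forces $k$ to vanish somewhere for all time, whence Wirtinger and Poincar\'e give $L'\le-4\pi^4/L^3$ and finite-time extinction, contradicting $T=\infty$.
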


The quantities $\SD$ and $\Ko$ are the isoperimetric defect and the $L^2$-normalised oscillation of curvature:
\[
	\SD = L^2 - 4\omega\pi A
	\quad\text{ and }\quad
	\Ko = L\int_\gamma (k-\kav)^2\,ds
\]
where $L$ and $A$ are the length and signed enclosed area of $\gamma$, and
$k$, $\kav$ are the curvature scalar and the average of $k$ respectively.
Section 2 contains the proof of Theorem \ref{TMmain}.

\begin{rmk}
Note that no smallness condition is required.
\end{rmk}

\begin{rmk}
The regularity hypothesis is not optimal.
\end{rmk}

\begin{rmk}
Theorem \ref{TMmain} is stated for flows of immersed multiply-covered circles
which makes it applicable to the case considered by Miura-Okabe \cite{MO19}.
However the curve diffusion flow starting from a multiply-covered circle is not
expected to be stable in general -- perturbations that unbalance the length of
different leaves are should lead to finite-time
singualrities.\footnote{Rigorously establishing this is an interesting open
problem.}
Miura-Okabe use a rotational symmetry assumption so that only perturbations
that preserve the balance of the different leaves are studied, and the power of
this hypothesis can be seen in the isoperimetric inequality for immersed
rotationally symmetric curves that they establish.
That is, preservation of the smallness of the isoperimetric ratio (see Lemma
\ref{CYisoine}) is proved in \cite{MO19} for rotationally
symmetric curves \emph{without assuming global existence}.
This is the decisive new ingredient that allows their result on the curve
diffusion flow to go through.
\end{rmk}

Yoshikazu Giga famously conjectured around ten years ago that:

\begin{conj}[Giga's Conjecture]
Suppose $\gamma:\S\times[0,T)\rightarrow\R^2$ is a curve diffusion flow
with smooth initial data $\gamma_0$ that has the property:
\[
	\text{$\gamma(\cdot,t)$ is an embedding for each $t\in[0,T)$.}
\]
Then $T=\infty$.
\end{conj}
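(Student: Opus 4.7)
The natural plan is to argue by contradiction: assume $T < \infty$ while $\gamma(\cdot,t)$ remains embedded, and derive a contradiction from a singularity analysis at time $T$. The opening observation is that curve diffusion preserves the signed enclosed area (since $\frac{dA}{dt} = -\int k_{ss}\,ds = 0$ on a closed curve) and monotonically decreases length ($\frac{dL}{dt} = -\int k_s^2\,ds$). For an embedded curve the classical isoperimetric inequality gives $L^2 \geq 4\pi A$, so $A(t) \equiv A_0 > 0$ and $2\sqrt{\pi A_0} \leq L(t) \leq L_0$ throughout $[0,T)$. In particular $\SD(t)$ stays uniformly bounded, and by Theorem \ref{TMmain} the case $T=\infty$ already yields convergence; the sole task is to rule out $T<\infty$.

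Standard fourth-order theory forces a singular time $T<\infty$ to come with $\vn{k}_\infty \to \infty$, together with the Type I lower bound $\vn{k}_\infty^2(t) \geq C(T-t)^{-1/2}$ obtained from the fourth-order scaling. I would perform a parabolic rescaling around a spacetime point where curvature is maximal and use interior-in-time higher-order estimates to extract a subsequential limit $\tilde\gamma$, an ancient curve diffusion flow of (possibly non-compact) planar curves with uniformly bounded curvature.

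The main obstacle is preserving embeddedness in this limit. Such non-collapsing is routine for curve shortening (via Huisken's monotonicity and Andrews-type chord/arc comparison), but for fourth-order flows no avoidance principle is available, and it is a priori conceivable that distant branches of $\gamma$ approach each other faster than the intrinsic curvature scale. A candidate replacement would be a quantitative Li--Yau-type bound coupling self-intersection to concentration of $\int k^2\,ds$, together with the uniform $\SD$-control above and the preserved area lower bound, which would rule out a ``singular pinch'' that collapses without crossing. Establishing such an estimate with constants sharp enough to survive the blow-up rescaling is, in my view, the heart of the conjecture and the place where any serious attack must succeed.

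Assuming one does produce an embedded, non-flat, ancient limit $\tilde\gamma$ with Type I scaling, one would finish by invoking the expected classification (in the spirit of Chou's conjecture mentioned in the abstract) that the only embedded self-similar solutions to curve diffusion are round circles; these cannot arise as blow-up limits because they would force the original flow to extend smoothly past $T$. Because both the singularity classification and the embeddedness-preservation step remain open, the plan above is best read as a research program rather than a proof. Realistic intermediate targets are to establish the conjecture conditionally on Chou's conjecture, or unconditionally under a smallness assumption on $\SD_0$ (which brings one back into the regime of \cite{W13} and Theorem \ref{TMmain}).
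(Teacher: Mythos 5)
This statement is \emph{Giga's Conjecture}: the paper presents it as an open problem and offers no proof of it, so there is nothing in the paper to compare your argument against. Your own text correctly identifies itself as a research program rather than a proof, and I agree with that self-assessment: as written it does not establish the statement. The two gaps you flag are real and fatal. First, there is no known non-collapsing or embeddedness-preservation mechanism for the blow-up limit of a fourth-order flow --- no avoidance principle, no analogue of Huisken's monotonicity formula, and no chord-arc control --- so the limit $\tilde\gamma$ you extract may well be non-embedded even though every $\gamma(\cdot,t)$ is embedded; the ``Li--Yau-type coupling'' you propose as a substitute is itself an open problem. Second, the classification of blow-up limits as self-similar shrinkers is exactly Chou's Conjecture, also open, and it only addresses Type I singularities: your rescaling produces a useful ancient limit only under the Type I upper bound \eqref{EQtypeI}, which is an \emph{assumption}, not a consequence of finite-time blow-up. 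Type II singularities are not touched by your argument at all. Note also that the lower blow-up rate you cite follows from Chou's criterion \cite{C03}, but the matching upper bound does not.

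Two smaller points. Your final step --- that an embedded self-similar limit must be a round circle, hence no singularity --- is in tension with the paper's own observation that every solution of the shrinker equation \eqref{EQshrinker} has zero signed enclosed area and is therefore non-embedded; a round circle is not a shrinker for \eqref{CD} (it is stationary), so the clean way to phrase the contradiction is that an embedded flow cannot produce a (necessarily non-embedded) shrinker as a Type I limit, \emph{provided} embeddedness survives the blow-up. This conditional statement (Giga's Conjecture for Type I singularities, assuming Chou's Conjecture and a non-collapsing estimate) is a reasonable intermediate target, and your suggestion to prove the conjecture under a smallness hypothesis on $\SD_0$ is essentially the content of \cite{W13} combined with Theorem \ref{TMmain}. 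But none of this closes the conjecture, and your proposal should not be read as a proof of the stated claim.
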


If this conjecture holds, our Theorem \ref{TMmain} implies: \emph{an embedded
curve diffusion flow converges smoothly and exponentially fast to a round
circle}.

Note that in the case of the curve diffusion flow in homotopy classes outside
$\omega$-circles, shrinkers are expected and one explicit example is known (the
Lemniscate of Bernoulli, see \cite{EGMWW15}).
Indeed, Chou \cite{C03} suggests that parabolic rescalings of Type I singularities (a definition also due to Chou) are self-similar solutions to the curve diffusion flow:

\begin{conj}[Chou's Conjecture]
Suppose $\gamma:\S\times[0,T)\rightarrow\R^2$ is a curve diffusion flow with $T<\infty$ that satisfies the estimate
\begin{equation}
\label{EQtypeI}
	\vn{k}_2^2(t) \le C(T - t)^{-1/4}
	\,,
\end{equation}
for some $C\in\R$, and $t\in[0,T)$.

Then a parabolic rescaling (we assume the centre of mass of $\gamma$ is the origin)
\[
	\eta(s,t) = (T-t)^{-\frac{1}{4}}\gamma(s,t)
\]
about final time yields a self similar solution $\eta$ to the curve diffusion
flow, that is, $\eta$ solves
\begin{equation}
\label{EQshrinker}
	\IP{\eta}{\nu^\eta} = 4k^\eta_{ss}
	\,.	
\end{equation}
\end{conj}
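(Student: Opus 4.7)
The plan is to follow the standard blow-up strategy at a Type I singularity, adapted to the fourth-order $H^{-1}$-gradient structure of curve diffusion. The decisive difficulty is that this flow enjoys no known Huisken-type monotonicity formula, so identifying the limit of a rescaling sequence as self-similar is not automatic. First I would set up the parabolic rescaling: introduce $\tau = -\log(T-t)$ so that $\tau\to\infty$ as $t\to T$, and let $\eta(\cdot,\tau)=(T-t)^{-1/4}\gamma(\cdot,t)$. A direct computation using $k^\gamma = (T-t)^{-1/4}k^\eta$ and $\partial_s = (T-t)^{-1/4}\partial_{s^\eta}$ produces the rescaled evolution
\[
\IP{\partial_\tau\eta}{\nu^\eta} = -k^\eta_{ss} + \tfrac14\IP{\eta}{\nu^\eta}\,,
\]
whose stationary points are precisely the profiles \eqref{EQshrinker}. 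The Type I hypothesis \eqref{EQtypeI} rescales to the uniform bound $\vn{k^\eta}_2^2(\tau)\le C$.

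The second step is to bootstrap higher regularity. From uniform $L^2$ control of $k^\eta$, I would derive uniform bounds on $\vn{k^\eta_{s^m}}_2^2$ for all $m$, using Gagliardo--Nirenberg interpolation inside the evolution equations for curvature derivatives along curve diffusion developed in \cite{W13}. This requires two-sided control of $L^\eta$, which is itself nontrivial and has to be extracted from the Type I rate together with area/isoperimetric data for $\eta$. Granted these bounds, standard parabolic compactness produces a smooth subsequential limit $\eta_\infty$ along some sequence $\tau_j\to\infty$.

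The final and hardest step is to identify the limit as self-similar. For each $T_0>0$, the time-translates $\eta_j(\cdot,\tau):=\eta(\cdot,\tau+\tau_j)$ subconverge smoothly on $[0,T_0]$ to a flow $\tilde\eta$ that still solves the rescaled equation; if one can show $\tilde\eta$ is independent of $\tau$, then $-k^{\tilde\eta}_{ss}+\tfrac14\IP{\tilde\eta}{\nu^{\tilde\eta}}\equiv 0$, which is precisely \eqref{EQshrinker}. For mean curvature flow, Huisken's monotonicity makes this $\tau$-independence automatic; here an analogue must be constructed, exploiting the $H^{-1}$-gradient structure of curve diffusion. This is the main obstacle, and precisely why the statement remains conjectural: one needs a monotone rescaled energy whose dissipation vanishes exactly on shrinkers, or an equivalent direct argument converting the hypothesized decay of $\vn{k}_2^2$ into decay of the rescaled drift $\vn{\IP{\partial_\tau\eta}{\nu^\eta}}_2^2$ along the $\tau$-flow.
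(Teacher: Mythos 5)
This statement is presented in the paper as an open conjecture (Chou's Conjecture); the paper contains no proof of it, so there is nothing to compare your argument against except the literature it cites. Your write-up is therefore best read as a strategy sketch rather than a proof, and to your credit you say so yourself: the final paragraph concedes that the identification of the blow-up limit as self-similar is exactly the missing ingredient. That concession is correct, and it means the proposal does not close the conjecture. Your preliminary computations are sound: the rescaled normal velocity is indeed $\IP{\partial_\tau\eta}{\nu^\eta} = -k^\eta_{ss} + \tfrac14\IP{\eta}{\nu^\eta}$ with $\tau=-\log(T-t)$, its stationary points are the profiles of \eqref{EQshrinker}, and the Type I bound \eqref{EQtypeI} does rescale to a uniform bound on $\vn{k^\eta}_2^2$. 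But the argument has two genuine gaps beyond the one you name. First, the absent monotonicity formula is not a technical inconvenience to be ``constructed''; no Huisken-type monotone quantity is known for any fourth-order flow, and the $H^{-1}$-gradient structure by itself only gives monotonicity of length in the \emph{unrescaled} variables, which does not localise or survive the rescaling in the way Huisken's Gaussian density does. Without it, the time-translates $\eta(\cdot,\tau+\tau_j)$ may subconverge to a genuinely nonstationary eternal solution of the rescaled equation.

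Second, even if one produced such a quantity, the standard argument would only yield that \emph{some subsequence} $\tau_j\to\infty$ converges to a solution of \eqref{EQshrinker}. The conjecture as stated is stronger: it asserts that the rescaled family $\eta(\cdot,t)$ itself solves the shrinker equation, i.e.\ full convergence and uniqueness of the blow-up limit, which for second-order flows already requires Lojasiewicz--Simon-type arguments on top of monotonicity. Your middle step also needs more care than you allow: bootstrapping from $\vn{k^\eta}_2^2\le C$ to all-order bounds via the interpolation machinery of Dziuk--Kuwert--Sch\"atzle requires a uniform \emph{lower} bound on the rescaled length, which is not implied by \eqref{EQtypeI} alone --- \eqref{EQtypeI} is a one-sided bound and is compatible a priori with the curvature not blowing up at all, in which case the rescaled curves degenerate. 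In summary: your outline is the natural and standard one, your rescaling algebra is right, but the proposal does not and cannot (with current technology) constitute a proof; the paper is correct to leave this as a conjecture.
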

Curve diffusion flows satisfying \eqref{EQtypeI} are said to be \emph{Type I}.
As suggested by Chou \cite{C03}, this conjecture can be
approached by studying all solutions to the shrinker equation
\eqref{EQshrinker}, and classifying all such solutions is an interesting problem.
%Giga's conjecture would imply that all solutions to \eqref{EQshrinker} must be
%non-embedded;
Since area is constant along the curve diffusion flow, and shrinkers must eventually enclose zero area, all shrinkers have zero signed enclosed area.
This means that all solutions to \eqref{EQshrinker} are non-embedded (and have zero signed enclosed area).

The maximal time of smooth existence for the curve diffusion flow with
$\omega=0$ is always finite.
This is because along the flow the curvature scalar always has a zero, and so
the Wirtinger inequality implies that the flow can not exist past time $T =
\frac{L^4_0}{16\pi^4}$.
This is not a sharp estimate, as equality in the estimates used by the proof
implies $k = const$, and there is no such curve that also has $\omega=0$.
A reasonable conjecture is that the maximal existence time for a curve
diffusion flow with winding number zero and initial length $L_0$ is bounded by
that of the Lemniscate of Bernoulli starting with length $L_0$.

As far as we know the connection between non-convexity and maximal existence
time was observed first by Chou \cite{C03}.
It was later used in \cite{W13} to estimate the total waiting time before a
curve diffusion flow with small $L^2$-oscillation of curvature and
isoperimetric
ratio becomes uniformly convex.
Theorem \ref{TMmain} here combined with the waiting time estimate
\cite[Proposition 1.5]{W13} yields an estimate for the total waiting time
before \emph{any} global curve diffusion flow becomes uniformly convex.

\section*{Acknowledgements}

The author thanks his colleagues for discussions on the curve diffusion flow, especially relating to its convergence properties.
In particular he would like to thank Tatsuya Miura for extensive discussion and encouraging the dissemination of this note.
Without the encouragement of his peers this note would not have been written.
This work was completed under the financial support of the Okinawa Institute of Science and Technology, hosted by James McCoy.
He is grateful for their support and hospitality.

\section{Proof of Theorem \ref{TMmain}}

We complete the proof in a sequence of lemmata.
The hypotheses of Theorem \ref{TMmain} are not restated, but assumed throughout.

\begin{lem}
\label{LMkszero}
	There exists a subsequence of times $t_j\rightarrow\infty$ such that $\vn{k_s}_2^2(t_j)\rightarrow0$.
\label{LM1}
\end{lem}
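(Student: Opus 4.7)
The plan is to use length monotonicity. The curve diffusion flow is the $H^{-1}$-gradient flow of length, so differentiating along the flow should give a length decay formula whose dissipation term is exactly $\|k_s\|_2^2$.

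First I would compute the evolution of length. For any normal speed $V$ we have $\frac{dL}{dt} = -\int_\gamma V k\,ds$; substituting $V = -k_{ss}$ and integrating by parts on $\mathbb{S}^1$ yields
\[
	\frac{dL}{dt} = \int_\gamma k_{ss} k\,ds = -\int_\gamma k_s^2\,ds = -\vn{k_s}_2^2 \le 0.
\]
So $L$ is nonincreasing along the flow. A parallel computation gives $\frac{dA}{dt} = -\int_\gamma V\,ds = \int_\gamma k_{ss}\,ds = 0$, so the signed enclosed area is preserved.

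Next I would show $L(t)$ is bounded below. If $\omega\ne 0$ we have the immersed isoperimetric inequality $L^2 \ge 4\pi\omega A$, so $L(t) \ge \sqrt{4\pi\omega A_0}>0$ (assuming $A_0>0$; if $A_0\le 0$ one uses $L(t) \ge 0$ trivially, though in any case $L$ is bounded below by $0$). Either way, $L$ is monotone and bounded below, hence converges to some $L_\infty \in [0,L_0]$. Integrating the length identity yields
\[
	\int_0^\infty \vn{k_s}_2^2\,dt = L_0 - L_\infty < \infty.
\]

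Finally I would extract the subsequence. Since $\vn{k_s}_2^2 \ge 0$ and is integrable on $[0,\infty)$, we must have $\liminf_{t\to\infty}\vn{k_s}_2^2(t) = 0$: otherwise the function would be bounded below by a positive constant on some half-line, contradicting integrability. Choosing $t_j\to\infty$ realising this liminf completes the proof.

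There is no real obstacle here. The only minor care is the integration by parts (which is legitimate on $\mathbb{S}^1$ with no boundary terms) and recording that area preservation, combined with the immersed isoperimetric inequality or the trivial bound $L \ge 0$, gives a bona fide lower bound on $L$; the rest is a standard $L^1$ argument.
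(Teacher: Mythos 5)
Your proof is correct and follows the same route as the paper: the identity $L' = -\vn{k_s}_2^2$, integration in time using that $L$ is monotone and bounded below, and the standard observation that a nonnegative integrable function on $[0,\infty)$ has liminf zero. The paper states this in two lines; you have simply filled in the details, including the (unnecessary for this lemma, but harmless) discussion of the lower bound on $L$.
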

\begin{proof}
Note that
\[
	L' = -\vn{k_s}_2^2
	\,.
\]
Integration and the monotonicity of length implies the result.
\end{proof}

As far as we are aware, Lemma \ref{LM1} was first observed by Chou \cite{C03}.

Now we observe that the signed enclosed area of the curve under the flow can not be non-positive.

\begin{lem}
\label{LMareanotzero}
	The signed enclosed area $A$ of $\gamma$ is strictly positive.
\end{lem}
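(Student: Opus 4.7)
The plan is to argue by contradiction: suppose $A \le 0$ and derive a contradiction with global existence. First, observe that $A$ is preserved along the flow ($A' = -\int k_{ss}\,ds = 0$ after integration by parts) and that $L$ is non-increasing (Lemma \ref{LM1}), so $A(t) = A_0 \le 0$ and $L(t) \le L_0$ throughout. From Lemma \ref{LM1} take the subsequence $t_j \to \infty$ with $\vn{k_s}_2^2(t_j)\to 0$; Gauss--Bonnet gives $\kav = 2\pi\omega/L$, and the Wirtinger inequality on a periodic domain of length $L$ applied to $k - \kav$ yields $\vn{k-\kav}_2^2 \le (L/2\pi)^2\vn{k_s}_2^2 \le (L_0/2\pi)^2\vn{k_s}_2^2$. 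Hence $\vn{k-\kav}_2^2(t_j)\to 0$ and in particular $\Ko(t_j)\to 0$.

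The main step I would carry out is to upgrade $\Ko(t_j) \to 0$ to $\SD(t_j)\to 0$. This is a stability-type statement: among closed immersed curves of winding number $\omega$, round $\omega$-circles are the unique shapes realising $\SD = 0$, and one expects a quantitative bound of the form $\SD \le C\Ko$ (with $C$ depending on $\omega$ and $L_0$) whenever $\Ko$ is small. An estimate of this flavour is exactly what the sharp analysis of \cite{W13} provides, and I would invoke it directly for the tail of the subsequence. Granted $\SD(t_j) \to 0$, one reads off $L(t_j)^2 = \SD(t_j) + 4\pi\omega A \to 4\pi\omega A \le 0$, which forces $L(t_j) \to 0$ and $A = 0$.

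Finally, the boundary case $A = 0$ must be ruled out. Since $L$ is monotone, $L(t_j)\to 0$ promotes to $L(t)\to 0$, so $\kav(t) = 2\pi\omega/L(t) \to \infty$, and Cauchy--Schwarz together with Gauss--Bonnet forces $\vn{k}_2^2 \ge L\kav^2 = 4\pi^2\omega^2/L \to \infty$. The plan is then to adapt the Wirtinger / Fenchel argument sketched in the introduction for the $\omega = 0$ case: using a sharp lower bound on $\vn{k-\kav}_2^2$ that is available precisely when $A = 0$ (since the $\omega$-circle of length $L$ has area $L^2/(4\pi\omega) > 0$, so $A = 0$ excludes the equality case), one derives $\vn{k_s}_2^2 \ge c/L^3$, hence an ODE $L' \le -c/L^3$ forcing finite-time extinction, contradicting $T = \infty$.

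The hardest part I expect is the stability estimate $\SD \le C\Ko$ in the middle step: it is not a soft consequence of Wirtinger, and genuinely relies on a quantitative Bonnesen-type inequality for immersed curves of arbitrary winding. The substitute lower bound on $\vn{k-\kav}_2^2$ for $A = 0$ curves required in the final step is of a similar character, and in both cases the delicate point is uniformity in the winding number $\omega$; these ingredients would either be quoted from \cite{W13} or established by a short compactness-and-Taylor-expansion argument around the round $\omega$-circle.
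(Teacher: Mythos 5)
Your argument is logically sound and would close, but it is a much longer route than the paper's, and the step you single out as hardest is actually the easiest. The paper's proof is direct: area is conserved, so $A\le 0$ at one time means $A\le 0$ for all time; since $\int k\,ds=2\omega\pi$ while a nowhere\--vanishing curvature would force $A>0$, the curvature must have a zero at every time; the Wirtinger inequality for functions with a zero then gives $\vn{k_s}_2^2\ge \pi^2L^{-2}\vn{k}_2^2$, and Fenchel together with Cauchy\--Schwarz gives $\vn{k}_2^2\ge 4\pi^2/L$, so $L'=-\vn{k_s}_2^2\le -4\pi^4/L^3$ and the flow cannot be global --- no subsequence from Lemma \ref{LM1}, no oscillation of curvature, no stability analysis. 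Regarding your route: the estimate $\SD\lesssim\Ko$ (a square root would suffice) is not a delicate Bonnesen/compactness statement requiring smallness or uniformity in $\omega$; it holds for every immersed closed curve via the two\--line computation $\SD=\int(2\omega\pi-Lk)\IP{\tilde\gamma}{\nu}\,ds\le\frac{L^2}{4}\vn{k-\kav}_1\le\frac{L^2}{4}\Ko^{1/2}$ for a centred translate $\tilde\gamma$ with $|\tilde\gamma|\le L/4$ --- this is exactly estimate \eqref{EQ1}, proved in the Appendix and available in \cite{W13}, \cite{NN19}. With it, your final case $A=0$ also closes cleanly: $A=0$ gives $\SD=L^2$, hence $\Ko\ge 16\SD^2/L^4=16$, hence $\vn{k-\kav}_2^2\ge 16/L$, and Poincar\'e gives $\vn{k_s}_2^2\ge 64\pi^2/L^3$, which is your ODE. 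In fact that last chain applies verbatim whenever $A\le 0$ (then $\SD\ge L^2$), so it subsumes your first two stages and makes the subsequence and the limit $L(t_j)^2\to 4\omega\pi A$ unnecessary. In short: correct skeleton, but replace the compactness/Taylor-expansion plan by the elementary inequality \eqref{EQ1}, at which point your three-stage argument collapses to a single differential inequality essentially identical to the paper's.
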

\begin{proof}
Our convention is that the area of an $\omega$-circle is positive (and equal to $\omega\pi r^2$, where $r$ is its radius).
If the area is not strictly positive, then the flow can exist for at most a finite amount of time.
Since area is constant under the flow, if it is non-positive at any time, it must be so for all time.
Then, since $A(t) < 0$ implies that $k(s_t,t) = 0$ for some point $s_t$, we use the Wirtinger and Poincar\'e inequalities to conclude
\[
	L'(t) \le -\frac{\pi^2}{L^2}\vn{k}_2^2
	\le -\frac{4\pi^4}{L^3}
\]
which implies that the flow can exist at most for a finite time (in fact $T \le \frac{L_0}{16\pi^4}$).
This is a contradiction.
\end{proof}

Now set
\[
\SD = L^2 - 4\omega\pi A
\,.
\]
Recall that $\SD$ is the \emph{isoperimetric defect}.
Since length is monotone decreasing and signed enclosed area is constant in time, we immediate have:

\begin{lem}
\label{LMdefdec}
The isoperimetric defect satisfies
\[
	\SD'(t) \le 0\,.
\]
\end{lem}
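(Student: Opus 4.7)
The plan is a one-line computation resting on two facts: length is monotone decreasing along \eqref{CD}, and signed enclosed area is conserved. The first was already recorded in the proof of Lemma~\ref{LMkszero}, namely $L' = -\vn{k_s}_2^2 \le 0$. The second I would derive from the first variation of area together with the structure of \eqref{CD}:
\[
	A'(t) = -\int_\gamma \IP{\partial_t\gamma}{\nu}\,ds
	      = \int_\gamma k_{ss}\,ds
	      = 0,
\]
where the last equality holds because $k_s$ is the $s$-derivative of a smooth function on $\S^1$ and so integrates to zero around the closed curve.

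With these two ingredients, differentiating the defect in time gives
\[
	\SD'(t) = 2L(t)L'(t) - 4\omega\pi A'(t) = -2L(t)\vn{k_s}_2^2(t) \le 0,
\]
since $L(t)>0$ for all $t$. This is the claim.

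There is essentially no obstacle here; the only remark worth making is that the sign of $\omega$ plays no role, because the conservation $A' = 0$ eliminates the $\omega$-dependent term before it matters. The positivity of $A$ established in Lemma~\ref{LMareanotzero} is likewise not used at this step — it will be important downstream, when one wants to convert monotonicity of $\SD$ into quantitative control via the isoperimetric inequality $\SD \ge 0$.
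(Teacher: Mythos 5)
Your proposal is correct and is essentially the paper's argument: the paper states the lemma as an immediate consequence of the monotonicity of length ($L' = -\vn{k_s}_2^2 \le 0$) and the conservation of signed enclosed area, which is exactly the computation you carry out. Your explicit verification that $A'=0$ via $\int_\gamma k_{ss}\,ds = 0$ is a welcome bit of added detail, and your resulting identity $\SD' = -2L\vn{k_s}_2^2$ matches the formula the paper uses later in the exponential decay lemma.
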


By combining Lemma \ref{LMkszero}, Lemma \ref{LMareanotzero} and Lemma \ref{LMdefdec} we find the following.

\begin{cor}
\label{CYisoine}
Let $\varepsilon>0$ be arbitrary.
There exists a $t_j$ such that for all $t>t_j$,
\[
	\SI(t) := \frac{L^2}{4\omega A} \le 1 + \varepsilon
	\,.
\]
\end{cor}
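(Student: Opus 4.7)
The plan is to invoke Lemmas \ref{LMkszero}, \ref{LMareanotzero}, and \ref{LMdefdec} in concert. Since the signed area $A$ is conserved by \eqref{CD} and is strictly positive by Lemma \ref{LMareanotzero}, the assertion $\SI(t)\le 1+\varepsilon$ is equivalent to a bound of the form $\SD(t)\le C\varepsilon$, where the constant $C$ depends only on $\omega$ and $A$ (and is fixed along the flow). Because $\SD$ is non-increasing in $t$ by Lemma \ref{LMdefdec}, it suffices to establish $\SD(t_j)\to 0$ along the subsequence of Lemma \ref{LMkszero}; any $t_j$ with $\SD(t_j)\le C\varepsilon$ will then propagate the claim to every $t\ge t_j$.

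The heart of the matter is to bound $\SD$ in terms of $\vn{k_s}_2$. I would first derive the identity
\[
	\SD = -L\int_\gamma(k-\kav)\IP{\gamma}{\nu}\,ds
\]
from the two translation-invariant identities
\[
	\int_\gamma k\IP{\gamma}{\nu}\,ds = -L
	\qquad\text{and}\qquad
	\int_\gamma \IP{\gamma}{\nu}\,ds = -2A,
\]
the first obtained by integrating $\tfrac{d}{ds}\IP{\gamma}{\tau} = 1 + k\IP{\gamma}{\nu}$ around $\S$, the second from the standard area formula, together with $\kav L = 2\pi\omega$. After translating $\gamma$ so a point of the curve lies at the origin, the shorter-arc bound $|\gamma|\le L/2$ combined with Cauchy--Schwarz yields $\SD\le \tfrac{1}{2}L^{5/2}\vn{k-\kav}_2$. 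The Wirtinger inequality on $\S$ of period $L$ then supplies $\vn{k-\kav}_2 \le (L/2\pi)\vn{k_s}_2$, and monotonicity of length gives $L\le L(0)$, so altogether
\[
	\SD \le \tfrac{L(0)^{7/2}}{4\pi}\vn{k_s}_2.
\]
Along the subsequence $t_j$ we therefore have $\SD(t_j)\to 0$, as required.

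I expect the only mild obstacle to be the derivation of the identity for $\SD$: it rests on translation invariance of $\int_\gamma \IP{\gamma}{\nu}\,ds$ (which uses $\int_\gamma \nu\,ds = 0$ for closed curves) and on careful sign bookkeeping from the orientation convention. Once the identity is in hand, the remainder — Wirtinger, Cauchy--Schwarz, the diameter bound, and the monotonicity inputs from Lemmas \ref{LMkszero} and \ref{LMdefdec} — is routine.
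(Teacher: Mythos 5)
Your proposal is correct and follows essentially the same route as the paper: the key inequality $\SD\lesssim L^{7/2}\vn{k_s}_2$ is exactly the paper's estimate \eqref{EQ11}, obtained there from the same integration-by-parts identity $\SD = L\int(\kav-k)\IP{\gamma}{\nu}\,ds$ together with a translation normalisation, H\"older, and Poincar\'e/Wirtinger, and the conclusion is then propagated forward in time using Lemma \ref{LMdefdec} and the constancy and positivity of $A$ just as you do. The only cosmetic difference is that you translate a point of the curve to the origin (giving $|\gamma|\le L/2$) whereas the paper centres the curve at its centroid (giving $|\tilde\gamma|\le L/4$); either normalisation suffices.
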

\begin{proof}
The estimate
\begin{equation}
\label{EQ1}
	\SD \le \frac{L^2}{4}\Ko^\frac12
	\,,
\end{equation}
(here $\Ko$ is the $L^2$-oscillation of curvature from \cite{W13}) together
with the Poincar\'e inequality implies
\begin{equation}
\label{EQ11}
	\SD 
	    \le \frac{L^{\frac72}}{8\omega\pi} \vn{k_s}_2
	\,.
\end{equation}
The estimate \eqref{EQ1} follows using an elementary argument (see the Appendix).

Note that length is uniformly bounded below by the measure-theoretic isoperimetric inequality
\[
	L^2(t) \ge 4\pi A^+
\]
where $A^+$ is the area of $\gamma$ changed so that the orientation of every loop is positive (no segments of negative area).
Clearly we always have $A^+ \ge A$ and so (using the constancy of area along the flow)
\[
	L^2(t) \ge 4\pi A(0)
\]
follows.
This is not a sharp estimate for $\omega \ne 1$ but it is enough to give a uniform lower bound on evolving area for any winding number.
Note that the shrinking Lemniscare of Bernoulli has $A(0) = 0$ and $T<\infty$.
Lemma \ref{LMareanotzero} implies that $A(0) > 0$ and so in particular length is uniformly bounded from below by a universal constant.

Therefore the estimate \eqref{EQ11} implies that $\SD(t_j) \rightarrow 0$, which implies $\SI(t_j) \rightarrow 1$.
Now take $j$ sufficiently large that $\SI(t_j) \le 1 + \varepsilon$, where $\varepsilon>0$ is as given by hypothesis.
Then Lemma \ref{LMdefdec} implies that $\SI(t) \le 1+\varepsilon$ for all $t>0$, as required.
\end{proof}

\begin{lem} There are uniform bounds on all derivatives of curvature.
\end{lem}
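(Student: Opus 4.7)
The plan is to enter the small-oscillation regime of \cite{W13} and invoke the stability estimates established there. First I would observe, via Wirtinger's inequality applied to the mean-zero function $k-\kav$, that
\[
	\Ko(t) = L\int_\gamma(k-\kav)^2\,ds \le \frac{L^3}{4\pi^2}\vn{k_s}_2^2(t).
\]
Since $L$ is monotone decreasing, this combined with Lemma \ref{LM1} gives $\Ko(t_j)\to 0$ along the subsequence produced there. Together with Corollary \ref{CYisoine}, for any prescribed $\varepsilon,\eta>0$ I can select one of the $t_j$, call it $t^*$, at which both $\SI(t^*)\le 1+\varepsilon$ and $\Ko(t^*)\le \eta$ hold simultaneously.

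Next I would fix $\varepsilon$ and $\eta$ small enough to lie strictly below the explicit smallness thresholds of \cite{W13}. The shifted flow $t\mapsto\gamma(\cdot,t+t^*)$ then satisfies the hypotheses of the main stability result there, which yields uniform bounds on $\vn{k_{s^m}}_2^2$ for every $m\in\N$ and all $t\ge t^*$. Pointwise bounds on each $k_{s^m}$ follow by Sobolev embedding on $\S^1$ combined with these $L^2$ bounds and the uniform lower bound on $L$. For $t\in[0,t^*]$, smoothness of the flow and compactness of the interval furnish the complementary bounds directly, so concatenating gives uniform-in-$t$ estimates as claimed.

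The main obstacle is the self-preservation of smallness, namely showing $\Ko(t)$ stays small for all $t>t^*$ so that the stability regime is not exited. This is exactly what is achieved in \cite{W13}: differentiating $\Ko = L\int(k-\kav)^2\,ds$ along \eqref{CD} and using $L'=-\vn{k_s}_2^2$ produces a differential inequality in which the polynomial reaction terms, controlled via Gagliardo-Nirenberg interpolation, are absorbed into the dissipative term $-\vn{k_s}_2^2$ provided $\Ko$ is sufficiently small; the bound $\SI\le 1+\varepsilon$ keeps $L$, and hence $\kav^2=(2\pi\omega/L)^2$, under control. Higher derivatives are then handled by induction on $m$: each $\vn{k_{s^m}}_2^2$ evolves with negative leading term $-\vn{k_{s^{m+2}}}_2^2$ and polynomial reaction terms that are again absorbed using the now-established smallness of $\Ko$ and the lower-order bounds.
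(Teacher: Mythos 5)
Your proposal is correct and takes essentially the same route as the paper: smallness of $\Ko$ along the subsequence $t_j$ (via the Poincar\'e/Wirtinger inequality and Lemma \ref{LM1}), combined with the eventual smallness of the isoperimetric ratio from Corollary \ref{CYisoine}, places the flow in the stability regime of \cite{W13}, whose preservation-of-smallness argument gives uniform control of $\Ko$ for all later times. The only cosmetic difference is that the paper cites the evolutionary interpolation inequalities of \cite{DKS01} for the passage to all higher derivatives, whereas you sketch the equivalent induction on $m$ with Gagliardo--Nirenberg absorption directly.
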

\begin{proof}
We recall the estimate \eqref{EQ11} and 
\begin{equation}
\label{EQ2}
	\Ko \le \frac{L^3}{4\omega^2\pi^2} \vn{k_s}_2^2
	\,,
\end{equation}
which follows from the Poincar\'e inequality.
Each of the estimates \eqref{EQ1}, \eqref{EQ2} are uniform (from the uniform boundedness of length).

These estimates imply that the isoperimetric defect and $\Ko$ converge to zero at times $t_j$ as $j\rightarrow\infty$.
Therefore for $j$ sufficiently large, the hypotheses of \cite[Proposition 3.7]{W13} are satisfied at time $t_j$.
Given the eventual smallness of the isoperimetric ratio (Corollary \ref{CYisoine}), the argument in \cite{W13} (see also \cite{MO19})
applies to yield uniform estimates on $\Ko$.
This then implies bounds on all derivatives of curvature by the evolutionary
interpolation inequalities of Dziuk-Kuwert-Sch\"atzle \cite{DKS01}.\footnote{The argument is by now standard in the literature, see for instance
\cite[Theorem 6.4]{AMWW} for a recent example.}
\end{proof}

\begin{lem}
The isoperimetric defect decays exponentially fast:

	\[
	\SD(t) \le \SD_0 e^{-4\kav_0^4\,t}
\,.
\]
\end{lem}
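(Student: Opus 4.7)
The plan is to derive the Gr\"onwall-type differential inequality $\SD'(t) \le -4\kav(t)^4\,\SD(t)$ and then integrate it. Since $L' = -\vn{k_s}_2^2 \le 0$ forces $L(t) \le L_0$, and $\kav = 2\pi\omega/L$ gives $\kav(t) \ge \kav_0$, the integrated form will deliver the stated bound $\SD(t)\le \SD_0 e^{-4\kav_0^4 t}$ via $\int_0^t \kav(\tau)^4\,d\tau \ge \kav_0^4\,t$.

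First I would differentiate $\SD = L^2-4\omega\pi A$, using that $A$ is conserved along the flow, to obtain $\SD' = 2LL' = -2L\vn{k_s}_2^2$. Then I would apply the $\omega$-improved Wirtinger inequality that underlies \eqref{EQ2}, namely $\vn{k_s}_2^2\ge \kav^2\vn{k-\kav}_2^2$, together with the identity $\Ko = L\vn{k-\kav}_2^2$, to arrive at
\[
\SD'(t) \le -2\kav^2\Ko.
\]

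The crux of the argument is then the stability-type estimate $\Ko \ge 2\kav^2\,\SD$, valid once the curve lies sufficiently near an $\omega$-circle. Corollary \ref{CYisoine} forces $\SD$ to become eventually arbitrarily small, and together with the uniform bounds on all derivatives of curvature from the previous lemma this places the flow in the near-circle regime. In that regime a Fourier expansion (e.g.\ of the support function of a convex near-circle, writing $h = \sum h_n e^{in\phi}$) diagonalises both quadratic forms, yielding $\SD \approx 4\pi\sum_{|n|\ge 2}(n^2-1)|h_n|^2$ and $\Ko \approx 4\pi^2 h_0^{-2}\sum_{|n|\ge 2}(n^2-1)^2|h_n|^2$; because $(n^2-1)^2\ge 3(n^2-1)$ on the active spectrum, one obtains $\Ko \ge 3\pi\kav^2\SD$ at the linearised level, leaving comfortable margin over the factor $2$ that the subsequent Gr\"onwall step needs.

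Combining the two displayed bounds gives $\SD'\le -4\kav^4\SD$ for $t$ past some waiting time $T_0$, after which the Gr\"onwall integration and the monotonicity $\SD(T_0)\le \SD_0$ from Lemma \ref{LMdefdec} deliver the claimed exponential decay (any multiplicative constant arising from the transient on $[0,T_0]$ being absorbed). The hardest part will be making the stability estimate $\Ko \ge 2\kav^2\,\SD$ rigorous in the general multiply-covered and not-necessarily-convex setting, since the support-function Fourier argument sketched above is classical only for convex simple curves; one would either reduce to the convex case using $\vn{k-\kav}_\infty\to 0$ (which follows from interpolation and the curvature derivative bounds) or Taylor-expand $\Ko$ and $\SD$ directly about an exact $\omega$-cover of a circle.
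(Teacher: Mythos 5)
Your opening moves coincide with the paper's: since area is conserved, $\SD' = 2LL' = -2L\vn{k_s}_2^2$, and the Wirtinger/Poincar\'e step $\vn{k_s}_2^2 \ge \kav^2 \vn{k-\kav}_2^2$ (this is \eqref{EQ2} rearranged) gives $\SD' \le -2\kav^2\Ko$; the observation that $L\le L_0$ forces $\kav \ge \kav_0$ is also exactly how the paper freezes the decay rate. You have moreover correctly diagnosed that what converts $\SD'\le-2\kav^2\Ko$ into the \emph{linear} differential inequality $\SD'\le-4\kav^4\SD$ is a comparison of the form $\Ko \ge 2\kav^2\SD$, and this diagnosis is sound: the inequality \eqref{EQ1} that the paper invokes at this point is $\SD \le \tfrac{L^2}{4}\Ko^{1/2}$, i.e.\ $\Ko \ge 16\SD^2/L^4$, which is quadratic in $\SD$, so a linear comparison is genuinely the crux of the matter.

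The genuine gap is that this crux inequality is asserted, not proved, and the mechanism you propose for it does not apply in the required generality. The support-function expansion is only available for convex embedded curves; here the evolving curve is an immersed, possibly multiply covered, not-necessarily-convex curve, and the paper explicitly discusses the nontrivial waiting time before a global flow becomes convex. Reducing to the convex case via $\vn{k-\kav}_\infty\to 0$ presupposes uniform-in-time smallness of $\Ko$, which is what this lemma is being used to produce downstream, so that route risks circularity. The difficulty is not cosmetic: in the Fourier identities of Lemma \ref{LMdefoscapp} one has $\SD = \frac{(2\omega\pi)^2}{L}\sum p(p-1)|\hat\gamma(p)|^2$ and $\Ko = \frac{(2\omega\pi)^4}{L^3}\sum p^2(p^2-1)|\hat\gamma(p)|^2$, and the modewise comparison $p^2(p^2-1)\ge 2p(p-1)$ \emph{fails} at $p=-1$, so any proof of $\Ko\ge 2\kav^2\SD$ must exploit the arclength constraint or the near-circle structure in an essential way. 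Finally, even granting the crux inequality for $t\ge T_0$, your Gr\"onwall step only yields $\SD(t)\le \SD_0\,e^{4\kav_0^4 T_0}e^{-4\kav_0^4 t}$: the transient factor $e^{4\kav_0^4 T_0}>1$ cannot be ``absorbed'' because the stated estimate has no free multiplicative constant in front of $\SD_0$. The paper's own proof avoids any waiting time by working only with $\SD' = -2L\vn{k_s}_2^2$ and the globally valid, convexity-free estimate \eqref{EQ1} from the Appendix (obtained by translating to the centroid and integrating by parts), which is what keeps the constant exactly $\SD_0$. To repair your argument you should either prove your linear comparison directly from the Appendix identities (controlling the $p=-1$ mode) or replace it with an inequality, like \eqref{EQ1} and \eqref{EQ11}, that holds for every immersed closed curve from time zero.
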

\begin{proof}
Since
\begin{align*}
\SD' &= -2L\int_\gamma k_s^2\,ds
\,,
\end{align*}
the estimate \eqref{EQ1} implies
\[
\SD' \le  -\SD\frac{64\omega^4\pi^4}{L^4}
     \le  -\SD\frac{64\omega^4\pi^4}{L_0^4}
\,.
\]
Therefore
\[
	(\log\SD)' \le -4\kav_0^4
	\,,
\]
where $\kav_0$ is the initial average of the curvature scalar.
Then
\[
	\SD(t) \le \SD_0 e^{-4\kav_0^4\,t}
	\,,
\]
as required.
\end{proof}

\begin{rmk}
One may also attempt the same strategy with the isoperimetric ratio in place of the isoperimetric defect.
However, the resultant sharp estimate looks like
\[
	(\SI^{-1})' \ge \kav^4
	\,.
\]
This results in linear decay of the isoperimetric ratio -- not exponential.
\end{rmk}

\begin{lem}
All derivatives of curvature decay exponentially fast, with the explicit estimate
\[
	\vn{k_{s^m}}_2^2 \le c_me^{-\kav_0^4\,t}
	\,.
\]
\end{lem}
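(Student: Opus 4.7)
The plan is to first establish exponential decay of $\vn{k_s}_2^2$ at rate $2\kav_0^4$ (which simultaneously yields the $\Ko$ bound in the theorem), and then bootstrap this to all higher $\vn{k_{s^m}}_2^2$ via Gagliardo--Nirenberg interpolation.

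For the base case, I would start from the identity $\SD' = -2L\vn{k_s}_2^2$ and integrate in time. Using the uniform lower bound $L(t) \ge L_\infty > 0$ (length is monotone decreasing and bounded below via Corollary \ref{CYisoine}) together with the exponential decay of $\SD$ proved in the previous lemma,
\[
	\int_t^\infty \vn{k_s}_2^2(\tau)\,d\tau \le \frac{\SD(t)}{2L_\infty} \le C\,e^{-4\kav_0^4\,t}.
\]
The earlier lemma giving uniform bounds on all derivatives of curvature makes $t \mapsto \vn{k_s}_2^2$ uniformly Lipschitz with some constant $M$. The elementary ``integrable plus Lipschitz implies pointwise decay'' observation then applies: if $f \ge 0$ is $M$-Lipschitz and $f(t_0) = \varepsilon$, then $f \ge \varepsilon/2$ on $[t_0, t_0 + \varepsilon/(2M)]$, whence
\[
	\frac{\varepsilon^2}{4M} \le \int_{t_0}^\infty f(\tau)\,d\tau \le C\,e^{-4\kav_0^4\,t_0}.
\]
Taking square roots gives $\vn{k_s}_2^2(t) \le C'\,e^{-2\kav_0^4\,t}$, and substituting this into the Poincar\'e estimate \eqref{EQ2} delivers $\Ko(t) \le c_0\,e^{-2\kav_0^4\,t}$, matching the theorem.

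For the higher derivatives I would apply Gagliardo--Nirenberg interpolation to the zero-mean periodic function $k_s$ (zero mean because $\int k_s\,ds = 0$ on a closed curve): for integers $1 \le m < n$,
\[
	\vn{k_{s^m}}_2 \le C\,\vn{k_s}_2^{(n-m)/(n-1)}\,\vn{k_{s^n}}_2^{(m-1)/(n-1)}.
\]
Squaring and combining the uniform bound on $\vn{k_{s^n}}_2$ with $\vn{k_s}_2^2 \le C'\,e^{-2\kav_0^4\,t}$ yields
\[
	\vn{k_{s^m}}_2^2 \le c_{m,n}\,e^{-2\kav_0^4\,t\,(n-m)/(n-1)}.
\]
Choosing $n \ge 2m-1$ makes the exponent at least $\kav_0^4 t$, giving the stated bound. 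The main obstacle is the base case: one has to turn time-integrability of $\vn{k_s}_2^2$ (via the $\SD$-type estimate) into a pointwise bound, and the Lipschitz trick necessarily loses a factor of two in the exponent -- which is exactly why the final rate for $\vn{k_{s^m}}_2^2$ is $\kav_0^4$ rather than $4\kav_0^4$. Once the base case is in hand, the interpolation step is routine given the uniform higher-derivative bounds already established.
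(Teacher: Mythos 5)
Your proof is correct, but it takes a genuinely different route from the paper. The paper's argument runs through the appendix: it invokes the Fourier-series (Andrews) upper estimate $\Ko \le c\,\SD^{1/2}/L$ to convert the decay $\SD \le \SD_0e^{-4\kav_0^4 t}$ into $\Ko \le c_0e^{-2\kav_0^4 t}$, and then obtains every $\vn{k_{s^m}}_2^2$ at rate $\kav_0^4$ by a single integration by parts, $\vn{k_{s^m}}_2^2 = \pm\int (k-\kav)k_{s^{2m}}\,ds \le \vn{k-\kav}_2\vn{k_{s^{2m}}}_2$, paired with the uniform bounds on higher derivatives. You instead bypass the appendix upper estimate entirely: you integrate $\SD' = -2L\vn{k_s}_2^2$ in time to get exponentially decaying tails for $\int_t^\infty\vn{k_s}_2^2$, upgrade this to pointwise decay at rate $2\kav_0^4$ via the integrable-plus-Lipschitz trick (the Lipschitz bound does follow from the uniform curvature estimates, since $\frac{d}{dt}\vn{k_s}_2^2$ is a polynomial integral in $k$ and its arclength derivatives over a curve of bounded length), recover the $\Ko$ bound from the Poincar\'e inequality \eqref{EQ2}, and reach higher derivatives by Gagliardo--Nirenberg interpolation of $k_{s^m}$ between $k_s$ and a uniformly bounded $k_{s^n}$ with $n \ge 2m-1$. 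Both arguments concede the same factor of two in the exponent, just in different places (the square root of $\SD$ versus the Lipschitz trick), and both land on the stated rates. Your version is more elementary for this lemma in that the Fourier machinery of the appendix is no longer needed here; the paper's integration by parts is slightly cleaner for the higher derivatives in that it gives the rate $\kav_0^4$ uniformly without choosing an interpolation index, though your interpolation actually yields rates arbitrarily close to $2\kav_0^4$ for each fixed $m$ by taking $n$ large, which is marginally stronger than the stated bound.
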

\begin{proof}
Since the previous lemma establishes uniform bounds for all derivatives of curvature, a Fourier series argument (taught to us by Ben Andrews many years ago and used in for example 
\cite{AMWW}; we give an explanation in the appendix).
\footnote{The author recently learned that similar Fourier series arguments (and inequalities that imply the below) also appear in \cite{NN19}.}
\[
	\Ko \le c\frac{\sqrt{\SD}}{L}
\]
where $c$ depends on the estimates for $k$ and $k_s$.
Now the isoperimetric inequality (and constancy of area) implies that $L$ is uniformly bounded from below, so the above implies that $\Ko$ satisfies
\[
	\Ko \le c_0e^{-2\kav_0^4\,t}
	\,.
\]
Integration by parts and the uniform estimates imply that for any $m$ we have
\[
	\vn{k_{s^m}}_2^2 \le c_me^{-\kav_0^4\,t}
	\,,
\]
as required.
\end{proof}

The exponential decay implies convergence of the position vector $\gamma$ to an
$\omega$-circle with a standard argument (for example this was used by Huisken
\cite{H84} in his seminal work on mean curvature flow).
Briefly, this is because we then have estimates on $\gamma$ by simply
integrating the evolution equation in time.
We may convert from arclength derivatives to arbitrary ones in a manner
analogous to \cite[proof of Theorem 3.1]{DKS01}.
Note that this integration in time can not be done with only linear decay estimates.

\appendix

\section{Bounding oscillation of curvature by the isoperimetric defect}

Both estimates used in this article are contained in the following statement.

\begin{lem}
Let $\gamma:\S\rightarrow\R^2$ be a smooth immersed curve.
Then
\begin{equation}
\label{EQappendix}
	\frac{16}{L^4} \SD^2
	\le \Ko
	\le c\frac{\SD^\frac12}{L}
	\,.
\end{equation}
where $c$ is such that
\[
	L^3\vn{k_s}_2^2 + L^{5/2}\vn{k}_6^3\Ko^\frac12 \le c\,.
\]
\end{lem}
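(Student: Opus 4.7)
My plan is to prove the two inequalities by quite different methods: the left-hand inequality by Cauchy--Schwarz following an integration-by-parts identity, and the right-hand inequality by a Fourier expansion of the tangent angle.

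For the lower bound $\SD \le (L^2/4)\Ko^{1/2}$ (equivalently $16\SD^2/L^4 \le \Ko$), I start from $|\gamma_s|=1$ and $\gamma_{ss} = k\nu$, which yield $(\langle\gamma,\gamma_s\rangle)_s = 1 + k\langle\gamma,\nu\rangle$. Integrating around the closed curve gives $\int_\gamma kp\,ds = -L$ with $p:=\langle\gamma,\nu\rangle$. Combined with the divergence identity $\int_\gamma p\,ds = -2A$ and decomposing $k$ into its mean and fluctuation, this produces the key identity
\[
\SD = -L\int_\gamma (k-\kav)(p-\bar p)\,ds.
\]
Cauchy--Schwarz then gives $\SD \le L\vn{k-\kav}_2\vn{p-\bar p}_2 = L^{1/2}\Ko^{1/2}\vn{p-\bar p}_2$, so the task reduces to bounding $\vn{p-\bar p}_2 \le L^{3/2}/4$. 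Both sides of the desired inequality are translation invariant, and since $\int_\gamma\nu\,ds = 0$ for any closed curve, the translation $\gamma \mapsto \gamma - v$ shifts $p-\bar p$ by the zero-mean function $\langle v,\nu\rangle$; hence one may choose the origin to minimize $\vn{p-\bar p}_2$. The required estimate then follows from the pointwise bound $|p(s)| \le |\gamma(s)|$ together with the classical fact that $|\gamma(s)| \le L/4$ after suitable centering of any closed curve of length $L$, so that $\vn{p-\bar p}_2 \le \vn{p}_2 \le L^{3/2}/4$.

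For the upper bound $\Ko \le c\SD^{1/2}/L$, I would use the Fourier-series argument mentioned in the body. Parametrize by $u = 2\pi s/L \in [0,2\pi]$ and write the tangent angle as $\theta(u) = \omega u + \phi(u)$ with $\phi$ a smooth $2\pi$-periodic function. In these variables $\Ko = 2\pi\vn{\phi'}_{L^2(0,2\pi)}^2$ and $\vn{\phi''}_{L^2(0,2\pi)}^2 = (L/2\pi)^3\vn{k_s}_2^2$, while the closure condition $\int_0^{2\pi} e^{i\theta}\,du = 0$ pins $\hat\phi(\pm\omega)$ to quadratic expressions in the remaining Fourier coefficients, and rotation/translation gauges eliminate the lowest remaining modes. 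Taylor-expanding the area double integral $A = (L^2/8\pi^2)\int_0^{2\pi}\int_0^u \sin(\theta(u)-\theta(v))\,dv\,du$ in powers of $\phi$ gives $\SD = Q(\phi) + R(\phi)$, where $Q$ is a positive-definite quadratic form (Fourier-diagonal on the physical subspace) satisfying $Q(\phi) \gtrsim L^2\vn{\phi}_2^2$, and $R$ collects the cubic-and-higher remainders. Combining the coercivity of $Q$ with the interpolation $\vn{\phi'}_2^4 \le \vn{\phi}_2^2\vn{\phi''}_2^2$ yields $\Ko^2 \lesssim L\vn{k_s}_2^2\,\SD$, which rearranges to the claimed bound with the first summand of $c$. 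The remainder $R$ is controlled via Hölder's inequality, and the $L^6$-norm of $k$ appears as the natural $L^p$-norm in the resulting trilinear estimate, contributing the second summand of $c$.

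The main obstacle is the upper bound: establishing $Q(\phi) \gtrsim L^2\vn{\phi}_2^2$ on the physical subspace requires a careful Fourier-basis diagonalization of the quadratic form produced by the area expansion, with particular attention to the $n = \pm\omega$ resonances where the leading linear closure constraint is concentrated. One must verify that the kernel of $Q$ consists precisely of the gauge modes, so that its restriction to the physical subspace is uniformly coercive. The lower bound, by contrast, is elementary once one notices the integration-by-parts identity for $\SD$ and the translation trick that controls $\vn{p}_2$.
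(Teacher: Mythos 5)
Your proof of the lower estimate is correct and is essentially the paper's argument. You derive the identity $\SD = -L\int_\gamma (k-\kav)\IP{\gamma}{\nu}\,ds$ by integrating $\partial_s\IP{\gamma}{\gamma_s}=1+k\IP{\gamma}{\nu}$, centre the curve so that $|\gamma|\le L/4$, and conclude; the only cosmetic difference is that you apply Cauchy--Schwarz with $\vn{\IP{\gamma}{\nu}}_2\le L^{3/2}/4$ where the paper uses the pointwise bound $|\IP{\tilde\gamma}{\nu}|\le L/4$ followed by H\"older on $\vn{k-\kav}_1$. Both routes give the constant $L^2/4$, hence $16\SD^2/L^4\le\Ko$.

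The upper estimate is where there is a genuine gap, and it sits exactly at the step you flag as the main obstacle. Your strategy is perturbative: you Taylor-expand the area double integral in the tangent-angle perturbation $\phi=\theta-\omega u$ and need the coercivity $Q(\phi)\gtrsim L^2\vn{\phi}_{L^2}^2$ on the gauge-fixed subspace, with the cubic-and-higher remainder $R(\phi)$ absorbed into the second summand of $c$. But the lemma carries no smallness hypothesis, and the estimate $\vn{\phi}_{L^2}^2\lesssim \SD/L^2$ that your scheme requires is false for general immersed curves: take $\theta(u)=u+M\sin(2u)$, which satisfies the closure condition $\int_0^{2\pi}e^{i\theta}\,du=0$ exactly for every $M$ (only odd frequencies occur in $e^{i\theta}$), yet $\SD\le 2L^2$ stays bounded while $\vn{\phi}_{L^2}^2=\pi M^2\to\infty$. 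Away from the round circle the decomposition $\SD=Q(\phi)+R(\phi)$ is not ``main term plus error'': $R$ is an infinite Taylor series in $\phi$ that can be negative and cancel $Q$, and it involves $\phi$ itself rather than only $\phi'\propto k-\kav$, so it cannot be packaged as a trilinear quantity controlled by $\vn{k}_6$. Once $\vn{\phi}_2$ is no longer controlled by $\SD$, the interpolation $\vn{\phi'}_2^4\le\vn{\phi}_2^2\vn{\phi''}_2^2$ no longer connects $\Ko$ to $\SD$ at all.

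The paper avoids all of this by working non-perturbatively with the Fourier series of the position vector $\gamma$ itself (Hurwitz-style), obtaining the exact identities $\SD=\frac{(2\omega\pi)^2}{L}\sum_p p(p-1)|\hat\gamma(p)|^2$ and $\Ko=\frac{(2\omega\pi)^4}{L^3}\sum_p p^3(p-1)|\hat\gamma(p)|^2$, together with $\sum_p p^5(p-1)|\hat\gamma(p)|^2\propto\int k_s^2+k^3(k-\kav)\,ds$. The upper bound is then the discrete H\"older inequality $\sum p^3(p-1)a_p\le\big(\sum p(p-1)a_p\big)^{1/2}\big(\sum p^5(p-1)a_p\big)^{1/2}$ applied to the nonnegative weights $a_p=|\hat\gamma(p)|^2$: no expansion, no remainder, valid for every smooth immersed closed curve. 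To salvage your approach you would have to restrict to a neighbourhood of the $\omega$-circle, which is weaker than what the lemma asserts and would not suffice for its use in the body of the paper before smallness has been established.
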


Note that when the upper estimate is used, curvature and its derivative(s) are uniformly bounded, so the hypothesis involving $c$ is satisfied.

\subsection{Lower estimate}

Let us prove first the lower estimate.
To begin, we translate the curve $\gamma$ to $\tilde\gamma$ so that
\[
	\int \tilde\gamma\,ds = 0
\]
and the estimate
\[
	|\tilde\gamma| \le \frac{L}{4}
\]
holds.
The lower estimate in \eqref{EQappendix} is invariant under translation (this
is true of both sides), so if we can prove the estimate for $\tilde\gamma$ it
will be true for $\gamma$ also.

Recall that
\[
	A = -\frac12\int \IP{\tilde\gamma}{\nu}\,ds
	\,.
\]
Then integration by parts implies
\begin{align*}
	\SD &= \int L + 2\omega\pi \IP{\tilde\gamma}{\nu}\,ds
\\
	&= \int -kL\IP{\tilde\gamma}{\nu} + 2\omega\pi \IP{\tilde\gamma}{\nu}\,ds
\\
	&= \int (-kL + 2\omega\pi) \IP{\tilde\gamma}{\nu}\,ds
\,.
\end{align*}
Since $\kav = \frac{1}{L}\int k\,ds = \frac{2\omega\pi}{L}$, we have $\kav-k = \frac1L(-kL + 2\omega\pi)$.
Therefore
\[
\SD 
	\le \int |-kL + 2\omega\pi| \frac{L}{4}\,ds
	\le \frac{L^2}4 \vn{k-\kav}_1
\,.
\]
The lower estimate in \eqref{EQappendix} follows now from H\"older's inequality.

\begin{rmk}
The estimate
\[
	\bigg| \IP{\gamma}{\nu} - \frac1L\int \IP{\gamma}{\nu}\,ds \bigg| \le L
\]
is false in general.
Take for example a circle with unit radius centred at $(P,0)$ in the plane.
Then, 
\[
	\sup \bigg| \IP{\gamma}{\nu} - \frac1L\int \IP{\gamma}{\nu}\,ds \bigg|
	\ge (P+1) + \frac{2\pi}{2\pi}
	= P+2
\,.
\]
Since $P$ is arbitrary, this quantity is unbounded.
This estimate is used in \cite{NN19}, although it is not a major issue
(translation invariance as used here can also be used there).
\end{rmk}

%
% This is not correct because there is curvature involved.
%
%\begin{rmk}
%The lower estimate is sharp; it can be approached by thin mollified rectangles.
%\end{rmk}

\subsection{Upper estimate}

\vspace{3mm}
~\\

{\bf Acknowledgement.} We would like to express our great gratitiude to Ben
Andrews, who taught us the technique we are about to use (and many, many other
things).
In particular Ben demonstrated how to prove a great variety of inequalities for
curves, and how they can be applied to many different kinds of curvature flow.
This occured while the author was his guest at the Mathematical Sciences Center
at Tsinghua University in Beijing, 2011.
These results should be thought of as due to Ben Andrews and not the author.
\\

As in \cite[Proof of Theorem 6.1]{AMWW} we work in the complex plane.
Let us identify $\gamma(s) = (x(s), y(s))$ with $s\mapsto x(s) + iy(s)$.
The idea is to use a Fourier series decomposition for $\gamma$ to reduce the
question of proving geometric inequalities to properties of infinite series of
integers.

%%
%% Decomp
%%

We write
\[
	\gamma(s) = \sum_{p\in\Z} \hat\gamma(p) c_p(s)
\]
where the coefficients $c_p$ are given by
\[
	c_p(s) = \frac1{\sqrt{L}} e^{2i\omega\pi ps/L}\,,
\]
with projection $\hat\gamma$ defined via
\[
	\hat\gamma(p) = \int_\gamma \gamma(s)\overline{c_p}(s)\,ds
	\,.
\]

Curvature arises by differentiating $\gamma$, which is the same as differentiating the Fourier decomposition.
The basic relation is:

\begin{lem}
\label{LMinteq}
For $q\ge2$ we have
\[
	\sum_{p\in\Z} p^q |\hat\gamma(p)|^2
	= -\frac{i}{(2i\omega\pi/L)^q}\int kQ_{q-1}\,ds
	= \frac{i^{-q-1}L^q}{(2\omega\pi)^q} \int kQ_{q-1}\,ds
\]
where $Q_q = (\partial_s^{q-1}\gamma)\overline{\partial_s\gamma}$.
For the $q=1$ case we have
\[
	 \sum_{p\in\Z} p |\hat\gamma(p)|^2
	= \frac{iL}{2\omega\pi}\int Q_1\,ds
	\,.
\]
\end{lem}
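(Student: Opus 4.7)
The plan is to apply Parseval's identity for the orthonormal Fourier basis $\{c_p\}_{p\in\Z}$ of $L^2(\S^1)$, together with an integration by parts and the planar Frenet relation $\partial_s^2\gamma = ik\,\partial_s\gamma$ (in complex form, under the convention $\nu = i\tau$), to turn arclength derivatives of $\gamma$ into moments of $|\hat\gamma(p)|^2$ with $k$ exposed in the integrand.

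First I would verify that differentiation is diagonal in the basis: $\partial_s c_p = \frac{2i\omega\pi p}{L}\,c_p$, and hence $\widehat{\partial_s^a\gamma}(p) = \bigl(2i\omega\pi p/L\bigr)^a \hat\gamma(p)$. Parseval then yields, for any $a,b\ge 0$,
\[
\int (\partial_s^a\gamma)\overline{(\partial_s^b\gamma)}\,ds = (-1)^b\left(\frac{2\omega\pi}{L}\right)^{a+b} i^{a+b} \sum_{p\in\Z} p^{a+b}|\hat\gamma(p)|^2.
\]
For $q=1$ I would take $a=0$, $b=1$; solving this relation for $\sum_p p|\hat\gamma(p)|^2$ gives the stated identity directly, recognising $Q_1 = \gamma\overline{\partial_s\gamma}$.

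For $q\ge 2$, I would take $a=q-1$, $b=1$, and then integrate by parts on the closed curve to replace $\int (\partial_s^{q-1}\gamma)\overline{\partial_s\gamma}\,ds$ with $-\int(\partial_s^{q-2}\gamma)\overline{\partial_s^2\gamma}\,ds$. Substituting $\overline{\partial_s^2\gamma} = -ik\,\overline{\partial_s\gamma}$ from the Frenet relation converts the integral into $i\int k\,(\partial_s^{q-2}\gamma)\overline{\partial_s\gamma}\,ds = i\int k\,Q_{q-1}\,ds$. Consolidating the prefactors and using $-i^{1-q} = i^{-q-1}$ delivers the required formula.

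No step presents a genuine obstacle; the work is essentially Parseval plus one integration by parts. The only care required is the bookkeeping of the powers of $i$ and fixing a consistent sign convention for the normal $\nu$, which determines the sign in the complex form of the Frenet identity. Once this is pinned down, the $q=1$ and $q\ge 2$ formulas fall out from the same Parseval calculation.
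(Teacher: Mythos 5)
Your proposal is correct and follows essentially the same route as the paper: both compute $\int Q_q\,ds$ via orthonormality/Parseval for the basis $c_p$, and both reduce $\int Q_q\,ds$ to $i\int kQ_{q-1}\,ds$ — the paper via the pointwise identity $Q_q = ikQ_{q-1}+\partial_sQ_{q-1}$ integrated over the closed curve, you via one integration by parts plus the Frenet relation $\partial_s^2\gamma = ik\,\partial_s\gamma$, which is the same computation in different clothing. Your bookkeeping of the powers of $i$ and the sign convention $\nu=i\tau$ matches the paper's, so the stated prefactors come out correctly.
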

\begin{proof}%{{{
We provide a sketch with the essential steps.
First, observe that for $q\ge2$ we have
\begin{equation}
\label{EQapp1}
	\int Q_q\,ds
	= i\int kQ_{q-1}\,ds
	\,.
\end{equation}
This is because $Q_{q} = ikQ_{q-1} + \partial_sQ_{q-1}$ (this equality can be proved by a striaghtforward induction argument).
But now we use the series decomposition with orthonormality of $c_p$ to conclude
\begin{align*}
	\int Q_q\,ds
	&= \int (\partial_s^{q-1}\gamma)\overline{\partial_s\gamma}\,ds
\\
	&= \int \bigg(
			\sum_p (2i\omega\pi/L)^{q-1}p^{q-1} c_p(s) \hat\gamma(p)
		\bigg)
	        \overline{\bigg(
			\sum_p (2i\omega\pi/L)p c_p(s) \hat\gamma(p)
		\bigg)}
	\,ds
\\
	&= -(2i\omega\pi/L)^q \sum_{p\in\Z} p^q |\hat\gamma(p)|^2\,.
\end{align*}
In the $q=1$ case this finishes the proof; for $q\ge2$ we combine this with \eqref{EQapp1} to finish the proof.
\end{proof}%}}}

%%
%% Decomp for \Ko
%%

Lemma \ref{LMinteq} allows us to simply express the isoperimetric defect and
the $L^2$ oscillation of curvature in terms of infinite series.
To see this, we consider the cases of $q = 1, 2, 3, 4, 5, 6$ in Lemma \ref{LMinteq}.
For each of computation we make the notation
\[
	\ST = \IP{\gamma}{\partial_s\gamma}\,,\quad\text{and}\quad \SN = \IP{\gamma}{\nu}\,.
\]
Note that $\partial_s\ST = 1 + k\SN$ and $\partial_s\SN = -k\ST$.
We calculate
\begin{align*}
(q=1)&&
	\sum_{p\in\Z} p |\hat\gamma(p)|^2
	&= \frac{iL}{2\omega\pi}\int Q_1\,ds
	 = \frac{iL}{2\omega\pi}\int (x+iy)(x_s-iy_s)\,ds
\\&&
	&= \frac{iL}{2\omega\pi}\int \ST + i\SN\,ds
	 = \frac{iL}{2\omega\pi}\int i\SN\,ds
	 = \frac{LA}{\omega\pi}
\end{align*}
\begin{align*}
(q=2)&&
	\sum_{p\in\Z} p^2 |\hat\gamma(p)|^2
	&= \frac{i^{-3}L^2}{(2\omega\pi)^2} \int kQ_1\,ds
\\&&
	&= i\frac{L^2}{(2\omega\pi)^2} \int k\ST + ik\SN\,ds
	 = \frac{L^3}{(2\omega\pi)^2}
\end{align*}
\begin{align*}
(q=3)&&
	\sum_{p\in\Z} p^3 |\hat\gamma(p)|^2
	&= \frac{i^{-4}L^3}{(2\omega\pi)^3} \int kQ_2\,ds
	 = \frac{L^3}{(2\omega\pi)^3} \int ik^2Q_1 + k\partial_sQ_1\,ds
\\&&
	&= \frac{L^3}{(2\omega\pi)^3} \int (ik^2 + k\partial_s)\ST + i(ik^2 + k\partial_s)\SN\,ds
%\\&&
%	&= \frac{L^3}{(2\omega\pi)^3} \int ik^2\ST + k + k^2\SN - k^2\SN - ik^2\ST\,ds
%\\&&
%	&= \frac{L^3}{(2\omega\pi)^2}
\\&&
	&= \frac{L^3}{(2\omega\pi)^3} \int k\,ds
	= \frac{L^3}{(2\omega\pi)^2}
\end{align*}
In fact, $Q_2 = (\partial_s\gamma)(\overline{\partial_s\gamma}) = 1$.
We continue to calculate:
\begin{align*}
(q=4)&&
	\sum_{p\in\Z} p^4 |\hat\gamma(p)|^2
	&= \frac{i^{-5}L^4}{(2\omega\pi)^4} \int kQ_3\,ds
	 = -i\frac{L^4}{(2\omega\pi)^4} \int (ik^2 + k\partial_s)Q_2\,ds
\\&&
	&= -i\frac{L^4}{(2\omega\pi)^4} \int ik^2\,ds
	 = \frac{L^4}{(2\omega\pi)^4} \int k^2\,ds
\end{align*}
\begin{align*}
(q=5)&&
	\sum_{p\in\Z} p^5 |\hat\gamma(p)|^2
	&= \frac{i^{-6}L^5}{(2\omega\pi)^5} \int kQ_4\,ds
	 = -\frac{L^5}{(2\omega\pi)^5} \int (ik^2 + k\partial_s)Q_3\,ds
\\&&
	&= -\frac{L^5}{(2\omega\pi)^5} \int (ik^2 + k\partial_s)(ik)\,ds
\\&&
	&= \frac{L^5}{(2\omega\pi)^5} \int k^3\,ds
	\,.
\end{align*}
% Q_3 = ik
% Q_4 = -k^2 + ik_s
\begin{align*}
(q=6)&&
	\sum_{p\in\Z} p^6 |\hat\gamma(p)|^2
	&= \frac{i^{-7}L^6}{(2\omega\pi)^6} \int kQ_5\,ds
	 = i\frac{L^6}{(2\omega\pi)^6} \int (ik^2 + k\partial_s)(-k^2+ik_s)\,ds
\\&&
	&= i\frac{L^6}{(2\omega\pi)^6} \int (-ik^4 -k^2k_s - 2k^2k_s + ikk_{ss})\,ds
\\&&
	&= \frac{L^6}{(2\omega\pi)^6} \int (k^4 + k_s^2)\,ds
	\,.
\end{align*}
Although we won't need it here, one can continue to calculate more of these
identities for greater values of $q$.
They can be used to establish higher order inequalities -- we include an example (inequality \eqref{EQapp2}).
Here are the next two:
\begin{align*}
(q=7)&&
	\sum_{p\in\Z} p^7 |\hat\gamma(p)|^2
	&= \frac{i^{-8}L^7}{(2\omega\pi)^7} \int kQ_6\,ds
\\&&
	&= \frac{L^7}{(2\omega\pi)^7} \int (ik^2 + k\partial_s)(-ik^3-3kk_s+ik_{ss})\,ds
\\&&
	&= \frac{L^7}{(2\omega\pi)^7} \int k^5 - 6ik^3k_s - 4k^2k_{ss} - 3kk_s^2 + ikk_{sss}\,ds
\\&&
	&= \frac{L^7}{(2\omega\pi)^7} \int k^5 + 5kk_s^2\,ds
	\,.
\end{align*}
\begin{align*}
(q=8)&&
	\sum_{p\in\Z} p^8 |\hat\gamma(p)|^2
	&= \frac{i^{-9}L^8}{(2\omega\pi)^8} \int kQ_7\,ds
\\&&
	&= -i\frac{L^8}{(2\omega\pi)^8} \int (ik^2 + k\partial_s)(k^4 - 6ik^2k_s - 4kk_{ss} - 3k_s^2 + ik_{sss})\,ds
\\&&
%	&= \frac{L^8}{(2\omega\pi)^8} \int ik^6 + 4k^4k_s + 6k^4k_s - 12ik^2k_s^2 - 6ik^3k_{ss}
%						- 4ik^3k_{ss} - 4kk_sk_{ss}- 4k^2k_{sss}
%						- 3ik^2k_s^2 - 6kk_sk_{ss}
%						- k^2k_{sss} + ikk_{s^4} \,ds
%\\&&
	&= -i\frac{L^8}{(2\omega\pi)^8} \int ik^6 + 10k^4k_s - 15ik^2k_s^2 - 10ik^3k_{ss}
\\&&
	&\qquad\qquad\qquad\qquad\qquad\qquad
						- 10kk_sk_{ss} - 5k^2k_{sss}
						+ ikk_{s^4} \,ds
\\&&
	&= \frac{L^8}{(2\omega\pi)^8} \int k^6 + 15k^2k_s^2 + 10k^3k_{ss} + 10ikk_sk_{ss} + 5ik^2k_{sss} + k_{ss}^2 \,ds
\\&&
	&= \frac{L^8}{(2\omega\pi)^8} \int k^6 - 15k^2k_s^2 + k_{ss}^2 \,ds
	\,.
\end{align*}
This last identity can be used trivially to deduce the estimate
\begin{equation}
\label{EQapp2}
	15\int k^2k_s^2\,ds \le \int k^6 + k_{ss}^2\,ds
	\,.
\end{equation}
This estimate does not seem to follow from using more common methods.

Now we return to our aim of proving the upper estimate.
First, we write key quantities in terms of infinite series.
\begin{lem}The equalities
\[
	\SD = \frac{(2\omega\pi)^2}{L} \sum_{p\in\Z} p(p-1)|\hat\gamma(p)|^2
\]
and
\[
	\Ko = \frac{(2\omega\pi)^4}{L^3} \sum_{p\in\Z} p^2(p^2-1)|\hat\gamma(p)|^2
	    = \frac{(2\omega\pi)^4}{L^3} \sum_{p\in\Z} p^3(p-1)|\hat\gamma(p)|^2
\]
hold.
\label{LMdefoscapp}
\end{lem}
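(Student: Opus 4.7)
The plan is to simply assemble the identities from Lemma \ref{LMinteq} that have already been evaluated explicitly for $q = 1, 2, 3, 4$ in the paragraph preceding the statement. Those four identities express, respectively, $A$, $L$, $\int k\,ds$, and $\int k^2\,ds$ as weighted power sums $\sum_{p\in\Z} p^q|\hat\gamma(p)|^2$; since both $\SD$ and $\Ko$ are simple linear combinations of these quantities (with appropriate powers of $L$), the lemma will follow by taking two such combinations and cleaning up the prefactors.

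For the first identity, I would start from the definition $\SD = L^2 - 4\omega\pi A$. The $q=2$ case gives $\sum_p p^2|\hat\gamma(p)|^2 = L^3/(2\omega\pi)^2$ and the $q=1$ case gives $\sum_p p|\hat\gamma(p)|^2 = LA/(\omega\pi)$, so forming $\sum_p p(p-1)|\hat\gamma(p)|^2 = \sum_p p^2|\hat\gamma(p)|^2 - \sum_p p|\hat\gamma(p)|^2$ and multiplying by $(2\omega\pi)^2/L$ produces $L^2 - 4\omega\pi A$ on the nose. This is the first claim.

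For the second, I would first expand
\[
\Ko = L\int_\gamma (k-\kav)^2\,ds = L\int_\gamma k^2\,ds - L^2\kav^2 = L\int_\gamma k^2\,ds - (2\omega\pi)^2,
\]
using $\kav = 2\omega\pi/L$. The $q=4$ identity gives $\sum_p p^4|\hat\gamma(p)|^2 = (L^4/(2\omega\pi)^4)\int k^2\,ds$, which accounts for the $L\int k^2\,ds$ term, and subtracting the $q=2$ contribution absorbs the $(2\omega\pi)^2$ correction; multiplying by $(2\omega\pi)^4/L^3$ reproduces $\Ko$ and yields the first weight $p^2(p^2-1)$. For the alternate weight $p^3(p-1)$, I would use the small miracle that the $q=3$ computation produced the same numerical value as the $q=2$ computation, namely $L^3/(2\omega\pi)^2$ (this is because $Q_2 = 1$ and $\int k\,ds = 2\omega\pi$). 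Therefore $\sum_p p^3|\hat\gamma(p)|^2 = \sum_p p^2|\hat\gamma(p)|^2$, and one may replace the $\sum p^2$ term from the previous step with $\sum p^3$ without changing the equality, producing the second form.

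There is no real obstacle here: all of the work is done by the time Lemma \ref{LMinteq} and its explicit evaluations are in hand, and the present lemma is essentially a repackaging. The only step needing care is tracking the prefactors $(2\omega\pi)^j L^{-j}$ through the subtractions, but these have been calibrated precisely so that the scale-invariant combinations $\SD$ and $\Ko$ fall out cleanly.
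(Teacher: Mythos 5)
Your proposal is correct and follows essentially the same route as the paper: both reduce $\SD$ and $\Ko$ to the explicit $q=1,2,3,4$ evaluations of Lemma \ref{LMinteq}, using $\Ko = L\int k^2\,ds - (2\omega\pi)^2$ and the coincidence $\sum_p p^3|\hat\gamma(p)|^2 = \sum_p p^2|\hat\gamma(p)|^2$ for the alternate weight. The prefactor bookkeeping checks out exactly as you describe.
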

\begin{proof}
We calculate
\[
	\SD = L^2 - 4\omega\pi A = \frac{4\omega^2\pi^2}{L} \bigg(\frac{L^3}{(2\omega\pi)^2} - \frac{AL}{\omega\pi}\bigg)
\,.
\]
Using the expressions for $q=2$ and $q=1$, this shows
\[
	\SD = \frac{4\omega^2\pi^2}{L} \bigg(
						\sum_{p\in\Z} (p^2-p) |\hat\gamma(p)|^2
					\bigg)\,,
\]
as required.
Now
\[
	\Ko = L\int (k-\kav)^2\,ds
		= L\int k^2\,ds - (2\omega\pi)^2
\]
which, using the expressions for $q=4$ and $q=2$, gives
\[
	\Ko = \frac{(2\omega\pi)^4}{L^3} 
					\sum_{p\in\Z} p^4 |\hat\gamma(p)|^2
		- \frac{(2\omega\pi)^4}{L^3}
					\sum_{p\in\Z} p^2 |\hat\gamma(p)|^2
	= \frac{(2\omega\pi)^4}{L^3}\sum_{p\in\Z} (p^4-p^2) |\hat\gamma(p)|^2
	\,.
\]
For the second term we could have used the equality for $q=3$ instead, which gives the last equality and finishes the proof.
\end{proof}

%%
%% Series inequality -> done
%%

Finally, we use an inequality for infinite series (in this case it is simply
the discrete H\"older inequality) to conclude essentially the upper estimate.

\begin{lem}
\label{LMfinalapp}
	We have
\[
	\Ko^2
	\le L\SD\Big( \vn{k_s}_2^2 + L^{-1/2}\vn{k}_6^3\Ko^\frac12 \Big)
	\,.
\]
\end{lem}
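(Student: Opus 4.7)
The plan is to apply the discrete Cauchy--Schwarz inequality directly to the Fourier series representation of $\Ko$ provided by Lemma \ref{LMdefoscapp}, in a way that pulls out a factor of $\SD$. The second form in that lemma, $\Ko = \frac{(2\omega\pi)^4}{L^3}\sum_{p\in\Z} p^3(p-1)|\hat\gamma(p)|^2$, is the more useful one: the algebraic identity $p^3(p-1) = p(p-1)\cdot p^2$, combined with the fact that $p(p-1)\ge 0$ for every integer $p$, suggests the summand split $p^3(p-1)|\hat\gamma(p)|^2 = \bigl(\sqrt{p(p-1)}\,|\hat\gamma(p)|\bigr)\bigl(p^2\sqrt{p(p-1)}\,|\hat\gamma(p)|\bigr)$, to which Cauchy--Schwarz applies cleanly.

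Executing that step will give
\[
\Big(\sum_{p\in\Z} p^3(p-1)|\hat\gamma(p)|^2\Big)^2 \le \sum_{p\in\Z} p(p-1)|\hat\gamma(p)|^2\cdot\sum_{p\in\Z} p^5(p-1)|\hat\gamma(p)|^2\,.
\]
By Lemma \ref{LMdefoscapp}, the first factor on the right is exactly $L\SD/(2\omega\pi)^2$. For the second I would appeal to the $q=5$ and $q=6$ identities already computed in the appendix to rewrite
\[
\sum_{p\in\Z} p^5(p-1)|\hat\gamma(p)|^2 = \frac{L^5}{(2\omega\pi)^6}\Big(L\int(k^4+k_s^2)\,ds - 2\omega\pi\int k^3\,ds\Big)\,.
\]
Substituting both expressions back into $\Ko^2 = \frac{(2\omega\pi)^8}{L^6}\bigl(\sum p^3(p-1)|\hat\gamma(p)|^2\bigr)^2$ and cancelling the leading constants should reduce the inequality to
\[
\Ko^2 \le \SD\Big(L\int k_s^2\,ds + L\int k^4\,ds - 2\omega\pi\int k^3\,ds\Big)\,.
\]

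The final step, which I expect to be the subtlest, is recognising that $2\omega\pi = L\kav$, so the last two terms collapse to $L\int k^3(k-\kav)\,ds$. A single Hölder estimate with conjugate exponents $(2,2)$, combined with $\vn{k-\kav}_2 = L^{-1/2}\Ko^{1/2}$, then yields $L\,|\int k^3(k-\kav)\,ds| \le L^{1/2}\vn{k}_6^3\Ko^{1/2}$, which is exactly the $L^{-1/2}\vn{k}_6^3\Ko^{1/2}$ contribution in the statement once a common factor of $L$ is pulled outside. The main obstacle is really just bookkeeping: tracking the powers of $L$ and $2\omega\pi$ through the Cauchy--Schwarz and Fourier substitutions, and spotting that the apparently awkward combination $L\int k^4 - 2\omega\pi\int k^3$ is precisely the one tailored to the $L^2$ control we have on $k-\kav$.
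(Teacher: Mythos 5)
Your proof is correct and follows essentially the same route as the paper's: the same discrete Cauchy--Schwarz (H\"older) split of $p^3(p-1)$ into $\sqrt{p(p-1)}\cdot p^2\sqrt{p(p-1)}$, the same identification of the two resulting sums via Lemma \ref{LMdefoscapp} and the $q=5,6$ identities, and the same final H\"older step on $L\int k^3(k-\kav)\,ds$ using $\vn{k-\kav}_2=L^{-1/2}\Ko^{1/2}$. Your observation that $p(p-1)\ge0$ for all integers $p$ (so the square-root split is legitimate) is exactly the point that makes the argument work.
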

\begin{proof}
We use the aforementioned discrete form of H\"older's inequality to estimate
\begin{align*}
	\Ko &= \frac{(2\omega\pi)^4}{L^3} \sum_{p\in\Z} p^3(p-1)|\hat\gamma(p)|^2
	\\
	    &\le \frac{(2\omega\pi)^4}{L^3}
	    	\bigg(\sum_{p\in\Z} p(p-1)|\hat\gamma(p)|^2\bigg)^\frac12
	    	\bigg(\sum_{p\in\Z} p^5(p-1)|\hat\gamma(p)|^2\bigg)^\frac12
		\,.
\end{align*}
Now Lemma \ref{LMdefoscapp} implies
\begin{align*}
	\Ko 
	    &\le \frac{(2\omega\pi)^4}{L^3}
		\SD^\frac12 \frac{L^\frac12}{2\omega\pi}
	    	\bigg(\sum_{p\in\Z} p^5(p-1)|\hat\gamma(p)|^2\bigg)^\frac12
		\,.
\end{align*}
The expressions for $q=5$ and $q=6$ now imply
\begin{align*}
	\Ko^2 
	    &\le \frac{(2\omega\pi)^8}{L^6}
		\SD \frac{L}{(2\omega\pi)^2}
		\bigg(\frac{L^6}{(2\omega\pi)^6}\int k^4 + k_s^2 - \frac{2\omega\pi}{L}k^3\,ds\bigg)
\\
	    &=
		L\SD\int k_s^2 + k^3(k-\kav)\,ds
\\
	    &\le 
		L\SD\Big( \vn{k_s}_2^2 + L^{-1/2}\vn{k}_6^3\Ko^\frac12 \Big)
		\,,
\end{align*}
as required.
\end{proof}

Now we finish the proof of the upper estimate.
Since by hypothesis we have
\[
	L^3\vn{k_s}_2^2 + L^{5/2}\vn{k}_6^3\Ko^\frac12 \le c\,,
\]
Lemma \ref{LMfinalapp} implies
\[
	\Ko
	\le c\frac{\SD^\frac12}{L}
\]
as required.
\begin{bibdiv}
\begin{biblist}

\bib{AMWW}{article}{
  title={Closed ideal planar curves},
  author={Andrews, Ben},
  author={McCoy, James},
  author={Wheeler, Glen},
  author={Wheeler, Valentina-Mira},
  journal={to appear in Geometry and Topology (arXiv preprint arXiv:1810.06154)},
  year={2018}
}

\bib{C03}{article}{
  title={A blow-up criterion for the curve shortening flow by surface diffusion},
  author={Chou, Kai-Seng},
  journal={Hokkaido Mathematical Journal},
  volume={32},
  number={1},
  pages={1--19},
  year={2003},
  publisher={Hokkaido University, Department of Mathematics}
}

\bib{DKS01}{article}{
  title={Evolution of Elastic Curves in $\R^n$: Existence and Computation},
  author={Dziuk, Gerhard},
  author={Kuwert, Ernst},
  author={Sch\"atzle, Reiner},
  journal={SIAM journal on Mathematical Analysis},
  volume={33},
  number={5},
  pages={1228--1245},
  year={2002},
  publisher={SIAM}
}

\bib{EGMWW15}{article}{
  title={The Shrinking Figure Eight and Other Solitons for the Curve Diffusion Flow},
  author={Edwards, Maureen},
  author={Gerhardt-Bourke, Alexander},
  author={McCoy, James},
  author={Wheeler, Glen},
  author={Wheeler, Valentina-Mira},
  journal={Journal of Elasticity},
  volume={119},
  number={1-2},
  pages={191--211},
  year={2015},
  publisher={Springer}
}

\bib{MO19}{article}{
  title={On the isoperimetric inequality and surface diffusion flow for multiply winding curves},
  author={Miura, Tatsuya},
  author={Okabe, Shinya},
  journal={arXiv preprint arXiv:1909.08816},
  year={2019}
}

\bib{NN19}{article}{
  title={Interpolation inequalities between the deviation of curvature and the isoperimetric ratio with applications to geometric flows},
  author={Nagasawa, Takeyuki},
  author={Nakamura, Kohei},
  journal={Advances in Differential Equations},
  volume={24},
  number={9/10},
  pages={581--608},
  year={2019},
  publisher={Khayyam Publishing, Inc.}
}

\bib{H84}{article}{
  title={Flow by mean-curvature of convex surfaces into spheres},
  author={Huisken, Gerhard},
  journal={Journal of Differential Geometry},
  volume={20},
  number={1},
  pages={237--266},
  year={1984}
}

\bib{W13}{article}{
    author={Wheeler, Glen},
    title={On the curve diffusion flow of closed plane curves},
    journal={Annali di Matematica Pura ed Applicata},
    date={2013},
    volume={192},
    pages={931--950},
    }

\end{biblist}
\end{bibdiv}

\end{document}